\newtheorem{thm}{Theorem}[section]
\newtheorem{cor}[thm]{Corollary}
\newtheorem{lem}[thm]{Lemma}
\newtheorem{prop}[thm]{Proposition}
\theoremstyle{definition}
\newtheorem{rem}[thm]{Remark}
\newtheorem{ex}[thm]{Example}
\newtheorem*{claim*}{Claim}
\theoremstyle{remark}
\newtheorem*{ac}{Acknowlegments}
\numberwithin{equation}{thm}
\def\Hom{\operatorname{\mathsf{Hom}}}
\def\sHom{\operatorname{\mathsf{\underline{Hom}}}}
\def\syz{\mathsf{\Omega}}
\def\Ext{\operatorname{\mathsf{Ext}}}
\begin{document}
\allowdisplaybreaks
\setlength{\baselineskip}{14.6pt}
\title{Syzygies of Cohen-Macaulay modules and Grothendieck groups}
\date{}
\author{Toshinori Kobayashi}
\address{Graduate School of Mathematics, Nagoya University, Furocho, Chikusaku, Nagoya, Aichi 464-8602, Japan}
\email{m16021z@math.nagoya-u.ac.jp}
\thanks{2010 {\em Mathematics Subject Classification.} 13C14, 13D15, 13H10}
\thanks{{\em Key words and phrases.} Cohen--Macaulay ring,
Cohen-Macaulay module, Grothendieck group}
\begin{abstract}
We study the converse of a theorem of Butler and Auslander-Reiten. We show that a Cohen-Macaulay local ring with an isolated singularity has only finitely many isomorphism classes of indecomposable summands of syzygies of Cohen-Macaulay modules if the Auslander-Reiten sequences generate the relation of the Grothendieck group of finitely generated modules. This extends a recent result of Hiramatsu, which gives an affirmative answer in the Gorenstein case to a conjecture of Auslander.
\end{abstract}
\maketitle
%\tableofcontents
%%%%%%%%%%%%%%%%%%%%%%%%%%%%%%%%%%%%%%%%%%%%%%%%%%%%%%%%
\section{Introduction}

Throughout this note, let $(R,\mathfrak{m},k)$ be a Cohen-Macaulay local ring with an isolated singularity. We denote by $\mathsf{CM}(R)$ (resp. $\mathsf{mod}(R)$) the category of (maximal) Cohen-Macaulay $R$-modules (resp. finitely generated $R$-modules) with $R$-homomorphisms.

Let $\mathsf{G}(\mathsf{CM}(R))$ be the quotient of the free abelian group $\bigoplus \mathbb{Z}[X]$ generated by the isomorphism classes $[X]$ of modules $X$ in $\mathsf{CM}(R)$ by the subgroup generated by
\[
\{[X]+[Z]-[Y] \mid Y\cong X\oplus Z\}.
\]
Thus $\mathsf{G}(\mathsf{CM}(R))$ is isomorphic to the free abelian group generated by the isomorphism classes of indecomposable Cohen-Macaulay $R$-modules.

We denote by $\mathsf{Ex}(\mathsf{CM}(R))$ the subgroup of $\mathsf{G}(\mathsf{CM}(R))$ generated by
\[
\{[X]+[Z]-[Y] \mid \text{there exists an exact sequence }0 \to Z \to Y \to X \to 0\text{ in }\mathsf{CM}(R)\}.
\]
Then the quotient group $\mathsf{G}(\mathsf{CM}(R))/\mathsf{Ex}(\mathsf{CM}(R))$ is nothing but the Grothendieck group $\mathsf{K}_0(\mathsf{CM}(R))$ of $\mathsf{CM}(R)$ and therefore coincides with the Grothendieck group of $\mathsf{mod} (R)$.

We also denote by $\mathsf{AR}(\mathsf{CM}(R))$ the subgroup of $\mathsf{G}(\mathsf{CM}(R))$ generated by
\[
\left\{[X]+[Z]-[Y] \middle|
\begin{array}{l}
\text{there exists an Auslander-Reiten sequence }\\
0 \to Z \to Y \to X \to 0 \text{ in }\mathsf{CM}(R)
\end{array}
\right\}.
\]

Concerning the relationship between $\mathsf{Ex}(\mathsf{CM}(R))$ and $\mathsf{AR}(\mathsf{CM}(R))$, the following theorem holds; see \cite{B}, \cite[Proposition 2.2]{AR} and \cite[Theorem 13.7]{Y}.

\begin{thm}[Butler, Auslander-Reiten] \label{01}
If $R$ is of finite $\mathsf{CM}$ type, then $\mathsf{Ex}(\mathsf{CM}(R))=\mathsf{AR}(\mathsf{CM}(R))$.
\end{thm}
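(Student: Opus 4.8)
First I would record the easy inclusion: $\mathsf{AR}(\mathsf{CM}(R))\subseteq\mathsf{Ex}(\mathsf{CM}(R))$ is immediate, since every Auslander--Reiten sequence is in particular a short exact sequence in $\mathsf{CM}(R)$. All the content is in the reverse inclusion, for which it suffices to show that each generator $[X]+[Z]-[Y]$ arising from a short exact sequence $\xi\colon 0\to Z\xrightarrow{i} Y\xrightarrow{q} X\to 0$ lies in $\mathsf{AR}(\mathsf{CM}(R))$. If $\xi$ splits this class is $0$, and if $X$ is free then $\Ext^1_R(X,Z)=0$ forces $\xi$ to split; so I may assume $\xi$ is non-split. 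After pulling $\xi$ back along the inclusions of the indecomposable summands of $X$ (absorbing the resulting bookkeeping into the recursion described below), I may further assume that $X$ is indecomposable and non-free, since finite $\mathsf{CM}$ type makes $\mathsf{G}(\mathsf{CM}(R))$ free of finite rank and lets me work generator by generator.

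The plan is then to peel off one Auslander--Reiten sequence at a time. Since $R$ has an isolated singularity, $\mathsf{CM}(R)$ admits Auslander--Reiten sequences; let $0\to\tau X\xrightarrow{j}E\xrightarrow{p}X\to0$ be the one ending in $X$, with $\tau X$ the Auslander--Reiten translate. Because $\xi$ is non-split, $q$ is not a split epimorphism, so by the right-almost-split property of $p$ it factors as $q=ph$ for some $h\colon Y\to E$; restricting $h$ to $Z=\ker q$ lands in $\tau X=\ker p$ and yields $g\colon Z\to\tau X$ making $\xi$ into a morphism of short exact sequences onto the Auslander--Reiten sequence over $\mathrm{id}_X$. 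A direct snake-lemma check then shows that
\[
0\to Z\xrightarrow{\binom{i}{-g}}Y\oplus\tau X\xrightarrow{(h,\,j)}E\to0
\]
is exact; call it $(\ast)$. Comparing classes yields the identity in $\mathsf{G}(\mathsf{CM}(R))$
\[
[X]+[Z]-[Y]=\bigl([X]+[\tau X]-[E]\bigr)+\bigl([E]+[Z]-[Y]-[\tau X]\bigr),
\]
whose first summand is the Auslander--Reiten relation of $X$ and whose second summand is the short exact sequence relation of $(\ast)$. Hence $[X]+[Z]-[Y]\equiv[E]+[Z]-[Y]-[\tau X]\pmod{\mathsf{AR}(\mathsf{CM}(R))}$, and it remains to dispose of $(\ast)$.

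The hard part will be termination. The difficulty is that $(\ast)$ has right-hand term $E$, which is strictly longer than $X$, so a naive induction on length does not close; finding a genuinely decreasing invariant is the main obstacle. My plan is to decompose $E=\bigoplus_kE_k$ into indecomposables, rerun the reduction on each non-free $E_k$, and observe that the comparison maps $h$ produced at successive stages compose into chains of radical (non-isomorphism) morphisms in $\mathsf{CM}(R)$ ending at $X$, i.e.\ into paths in the Auslander--Reiten quiver. Here both hypotheses are used decisively: finite $\mathsf{CM}$ type bounds the indecomposables that can occur, while the isolated singularity guarantees that the stable homomorphism modules $\sHom_R(-,-)$ have finite length. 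In this finite-length setting I would invoke a Harada--Sai-type argument to conclude that every sufficiently long composition of radical morphisms vanishes; the factorizations then eventually force the sequences produced by the reduction to split, their classes vanish, and the recursion terminates after finitely many steps. Collecting the accumulated Auslander--Reiten relations exhibits $[X]+[Z]-[Y]$ as an element of $\mathsf{AR}(\mathsf{CM}(R))$, completing the reverse inclusion. Making this termination precise, and in particular controlling the bookkeeping introduced when reducing to indecomposable right-hand terms, is where essentially all the work lies.
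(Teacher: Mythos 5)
The paper does not actually prove Theorem \ref{01}; it quotes it from Butler, Auslander--Reiten and Yoshino, so your attempt must be measured against those standard proofs. Your main reduction step is exactly theirs, and it is correct: the sequence $(\ast)$ is exact, and it is in fact precisely the pullback $p^{*}\xi$ of $\xi$ along the almost split map $p\colon E\to X$ (so, for the record, it is the composites of the almost split maps $p$, not of the lifts $h$, that trace paths in the AR quiver and along which the later sequences are pulled back). The trouble is in your two supporting devices. The first genuine gap is the reduction to indecomposable right-hand terms by ``pulling back along the inclusions of the summands'': this does \emph{not} preserve classes in $\mathsf{G}(\mathsf{CM}(R))$, so what you defer as bookkeeping is an actual obstruction. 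If $\eta\in\Ext_R^1(A\oplus B,Z)$ has restrictions $\eta_A,\eta_B$ with middle terms $W_A,W_B$, and $W$ is the middle term of $\eta$, then the desired identity of classes forces $W_A\oplus W_B\cong W\oplus Z$; but all one has in general is an exact sequence $0\to Z\to W_A\oplus W_B\to W\to 0$, and (by Miyata's theorem, plus locality of $\mathrm{End}_R(Z)$ when $Z$ is indecomposable) such an isomorphism exists only if $\eta_A$ or $\eta_B$ already splits. Taking $\eta_A=\eta_B$ to be the class of an Auslander--Reiten sequence gives a concrete failure. Worse, the defect is itself a new, generally non-split exact-sequence relation which is not a pullback of $\xi$ along anything radical, so your recursion has no handle on it. The repair is either to decompose by the filtration $0\to L_{k-1}\to L_k\to N_k\to 0$ (which is additive in $\mathsf{G}(\mathsf{CM}(R))$ on the nose, its pieces being pushforwards of pullbacks of $\xi$), or to avoid decomposing altogether: split off the summands of the right-hand term over which the sequence splits, and factor $q$ simultaneously through the direct sum of the AR sequences of all remaining summands.

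The second gap is the termination claim itself: ``every sufficiently long composition of radical morphisms vanishes'' is false in $\mathsf{CM}(R)$ once $\dim R>0$ (multiplication by a parameter $x\in\mathfrak{m}$ is a radical endomorphism of any indecomposable non-free Cohen--Macaulay module, with $x^{n}\neq 0$ for all $n$), and classical Harada--Sai is unavailable because the modules, as opposed to their stable Hom's, have infinite length. What is true, and what suffices, is the stable statement: finite $\mathsf{CM}$ type and the isolated singularity hypothesis make $\bigoplus_{i,j}\sHom_R(M_i,M_j)$ an artin algebra, whence $\mathrm{rad}^{N}\,\underline{\mathsf{CM}}(R)=0$ for some $N$; a composition of $N$ radical maps therefore factors through a free module, and since $\Ext_R^1$ vanishes on free modules, pulling $\xi$ back along it yields a split sequence. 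With these two repairs your scheme does close up, and it is then a genuinely different termination argument from the cited ones. But be aware that Butler, Auslander--Reiten and Yoshino terminate more cheaply: they induct on the integer $c(\xi)=\sum_i \mathrm{length}_R\,\mathrm{coker}\bigl(\Hom_R(M_i,Y)\to\Hom_R(M_i,X)\bigr)$, finite by finite $\mathsf{CM}$ type and the isolated singularity, which drops by exactly $\mathrm{length}_R\bigl(\mathrm{End}_R(X)/\mathrm{rad}\,\mathrm{End}_R(X)\bigr)$ at each AR-step because of the exact sequence of functors $0\to F_{(\ast)}\to F_{\xi}\to S_X\to 0$; that route needs neither the summand bookkeeping nor any Harada--Sai-type input.
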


Here we say that $R$ is of {\it finite $\mathsf{CM}$ type} if there are only finitely many isomorphism classes of indecomposable Cohen-Macaulay $R$-modules.

Auslander conjectured that the converse of Theorem \ref{01} holds. Our main result is the following theorem, which yields a weaker version of the converse of Theorem \ref{01}.

\begin{thm}\label{1}
If $\mathsf{Ex}(\mathsf{CM}(R))\otimes_{\mathbb{Z}} \mathbb{Q}=\mathsf{AR}(\mathsf{CM}(R))\otimes_{\mathbb{Z}} \mathbb{Q}$, then there exist only finitely many isomorphism classes of indecomposable summands of (first) syzygies of Cohen-Macaulay $R$-modules.
\end{thm}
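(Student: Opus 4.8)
The plan is to work with the explicit generators of the three subgroups together with a dual family of functionals. As $R$ has an isolated singularity, $\mathsf{CM}(R)$ admits Auslander--Reiten sequences $0\to\tau X\to E_X\to X\to0$ for each non-free indecomposable $X$, so $\mathsf{AR}(\mathsf{CM}(R))$ is generated by the elements $\theta_X:=[\tau X]+[X]-[E_X]$, and $\sHom_R(W,M)$ has finite length for all $W,M\in\mathsf{CM}(R)$. First I would introduce the additive functionals $\Theta_W:=\ell_R\sHom_R(W,-)\colon\mathsf{G}(\mathsf{CM}(R))\to\mathbb{Z}$, which are well defined since stable $\Hom$ is additive on direct sums. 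The defining property of Auslander--Reiten sequences gives, for indecomposable $W$, that $\Theta_W(\theta_X)$ equals a non-zero constant $c_X$ when $W\cong X$ and $0$ otherwise. Hence $\{\Theta_W\}$ is dual to $\{\theta_X\}$: the $\theta_X$ are $\mathbb{Z}$-linearly independent and $\mathsf{AR}(\mathsf{CM}(R))$ is free on them; and since any homomorphism into the free module $R$ factors through a projective, $\Theta_W([R])=0$, so $\{[R]\}\cup\{\theta_X\}$ is $\mathbb{Q}$-linearly independent in $\mathsf{G}(\mathsf{CM}(R))\otimes\mathbb{Q}$.

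Next I would feed the syzygies into $\mathsf{Ex}(\mathsf{CM}(R))$. For $M\in\mathsf{CM}(R)$ a minimal free cover yields a short exact sequence $0\to\syz M\to R^{b}\to M\to0$ in $\mathsf{CM}(R)$ (the first syzygy of a maximal Cohen--Macaulay module over a Cohen--Macaulay ring is again maximal Cohen--Macaulay), so $\sigma_M:=[M]+[\syz M]-b[R]$ lies in $\mathsf{Ex}(\mathsf{CM}(R))$. Because $\syz$ is additive on direct sums and annihilates free modules, $[M]\mapsto[\syz M]$ extends to an endomorphism $\syz_*$ of $\mathsf{G}(\mathsf{CM}(R))$, and $\sigma_M=(\mathrm{id}+\syz_*)[M]-b[R]$. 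The hypothesis forces every $\sigma_M$ into $\mathsf{AR}(\mathsf{CM}(R))\otimes\mathbb{Q}$, so writing $L:=(\mathsf{AR}(\mathsf{CM}(R))\otimes\mathbb{Q})+\mathbb{Q}[R]$ we obtain the compact reformulation
\[
\operatorname{im}(\mathrm{id}+\syz_*)\otimes\mathbb{Q}\ \subseteq\ L .
\]
Using the dual basis this becomes quantitative: $\sigma_M=\sum_W c_W^{-1}\Theta_W(\sigma_M)\,\theta_W$ must hold as a \emph{finite} sum, where $\Theta_W(\sigma_M)=\ell_R\sHom_R(W,M)+\ell_R\sHom_R(W,\syz M)$.

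The goal now becomes a statement about the single operator $\syz_*$: the indecomposable summands of syzygies are exactly the indecomposables $N$ with $\mu(N,\syz M)\neq0$ for some $M$, i.e.\ the non-zero rows of the matrix of $\syz_*$ in the indecomposable basis, and I must show these are finite in number. Comparing the coefficient of a fixed indecomposable $N$ on both sides of $\sigma_M\in L$, expanding each $\theta_X=[\tau X]+[X]-[E_X]$, yields
\[
\mu(N,M)+\mu(N,\syz M)=q_{N,M}+q_{\tau^{-1}N,M}-\sum_X q_{X,M}\,a_{NX},\qquad q_{X,M}=c_X^{-1}\Theta_X(\sigma_M),
\]
where $a_{NX}$ counts the irreducible maps $N\to X$. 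The local finiteness of the Auslander--Reiten quiver makes each such sum finite, and all multiplicities and lengths are non-negative. I would combine these relations with the fact that the set of syzygy summands is closed under $\syz_*$ (if $N\mid\syz M$ then $\syz N\mid\syz^2 M$) and with the finiteness of $\operatorname{rank}(\mathsf{G}/\mathsf{AR})=\operatorname{rank}\mathsf{K}_0(\mathsf{mod} R)$ (equal ranks by the hypothesis) to try to confine the non-zero rows of $\syz_*$ to a finite set.

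The \emph{main obstacle} is exactly this last passage: a bound on $\operatorname{rank}(\mathsf{G}/\mathsf{AR})$ does not by itself bound the number of indecomposable summands of syzygies, since infinitely many indecomposables collapse into a finite-dimensional quotient. To cross this gap I expect to argue by contradiction: assuming infinitely many syzygy summands, I would use the $\syz_*$-closedness and the positivity in the coefficient relations above to produce an element of $\mathsf{Ex}(\mathsf{CM}(R))$ whose expansion in $\{[R]\}\cup\{\theta_X\}$ cannot terminate---equivalently, an exact-sequence relation lying outside the $\mathbb{Q}$-span of the Auslander--Reiten relations---contradicting $\mathsf{Ex}(\mathsf{CM}(R))\otimes\mathbb{Q}=\mathsf{AR}(\mathsf{CM}(R))\otimes\mathbb{Q}$. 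Controlling the interaction between $\syz_*$ and the translate $\tau$ inside $L$, and verifying that the manufactured relation genuinely escapes $L$, is the technical heart of the argument.
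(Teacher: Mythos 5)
Your proposal has a fatal error at its foundation, and, separately, it is incomplete exactly at the point you yourself flag as the ``main obstacle.'' The error is the orthogonality claim that $\Theta_W(\theta_X)=0$ for indecomposable $W\not\cong X$. That is the classical statement for artin algebras, where one uses ordinary $\Hom$-lengths; here, in positive dimension, one is forced to use stable Hom (only $\sHom$ has finite length, thanks to the isolated singularity), and $\sHom_R(W,-)$ is \emph{not} left exact on short exact sequences in $\mathsf{CM}(R)$. The correct long exact sequence (Lemma~\ref{3}) continues to the left by syzygy terms, so the defect $\Theta_W(\theta_X)=[W,\tau X]+[W,X]-[W,E_X]$ is a sum of two lengths: the cokernel of $\sHom_R(W,E_X)\to\sHom_R(W,X)$, which is nonzero only if $W\cong X$, \emph{and} the kernel of $\sHom_R(W,\tau X)\to\sHom_R(W,E_X)$, which by the adjunction $(\syz^{-1},\syz)$ of Lemma~\ref{4} is governed by maps out of $\syz^{-1}W$ and is nonzero exactly when $X$ is a direct summand of $\syz^{-1}W$, i.e.\ when $W$ is the non-free part $\underline{\syz X}$ of $\syz X$. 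Concretely, for the node $R=k[[x,y]]/(xy)$ the AR sequence ending at $X=R/(x)$ is $0\to R/(y)\to R\to R/(x)\to 0$, and for $W=R/(y)\not\cong X$ one computes $\Theta_W(\theta_X)=[W,R/(y)]+[W,R/(x)]-[W,R]=1+0-0\neq 0$. So your functionals are not dual to the $\theta_X$; their support on AR relations is the two-element set $\{X,\underline{\syz X}\}$ (this is precisely Proposition~\ref{5}), and this failure of duality is the very reason the theorem concludes finiteness of syzygy summands rather than finite $\mathsf{CM}$ type. Everything you build on the Kronecker delta --- freeness of $\mathsf{AR}(\mathsf{CM}(R))$ on the $\theta_X$, the coefficient identities for $\sigma_M$ --- has to be redone with this two-element support.

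Second, even after correcting the support, your argument does not close: you feed infinitely many relations $\sigma_M$ into the hypothesis, each yielding a finite AR-expansion whose support depends on $M$, and you concede that bounding the union of these supports is the unresolved ``technical heart.'' The paper avoids this uniformity problem entirely with Hiramatsu's lemma (Lemma~\ref{2}): there is a \emph{single} Cohen--Macaulay module $X$ with $\sHom_R(M,X)\neq 0$ for every non-free Cohen--Macaulay $M$. One then needs only the single relation $[X]+[\syz X]-[P]\in\mathsf{Ex}(\mathsf{CM}(R))$: the hypothesis gives, after clearing denominators, one finite identity $a([X]+[\syz X]-[P])=\sum_{i=1}^n b_i([A_i]+[C_i]-[B_i])$ with AR sequences $0\to A_i\to B_i\to C_i\to 0$. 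Applying $[U,-]$ for an arbitrary non-free indecomposable $U\in\mathsf{\Omega CM}(R)$ (such $U$ is Cohen--Macaulay, and $[U,P]=0$) makes the left-hand side strictly positive, so some term on the right is nonzero, and the two-element support pins $U$ down to the finite list $\{C_i,\underline{\syz C_i}\}_{i=1}^{n}$. These two ingredients --- the uniform test module and the corrected support statement --- are exactly what your proposal lacks, and the second is not merely missing but misstated in a way that would, if true, prove Auslander's full conjecture rather than the theorem actually being claimed.
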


If $R$ is Gorenstein, then every Cohen-Macaulay $R$-module is a first syzygy of some Cohen-Macaulay $R$-module. Hence Theorem \ref{1} recovers the following result, which is proved by Hiramatsu \cite{H} and gives an affirmative answer to Auslander's conjecture in the case of Gorenstein local rings.

\begin{cor}[Hiramatsu]
Assume that $R$ is Gorenstein. If $\mathsf{Ex}(\mathsf{CM}(R))=\mathsf{AR}(\mathsf{CM}(R))$, then $R$ is of finite $\mathsf{CM}$ type.
\end{cor}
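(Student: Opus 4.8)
The plan is to derive this Corollary as an immediate consequence of Theorem~\ref{1}, whose conclusion I am free to use. Since every Auslander--Reiten sequence is in particular a short exact sequence, the defining generators of $\mathsf{AR}(\mathsf{CM}(R))$ are among those of $\mathsf{Ex}(\mathsf{CM}(R))$, so $\mathsf{AR}(\mathsf{CM}(R)) \subseteq \mathsf{Ex}(\mathsf{CM}(R))$ and the hypothesis $\mathsf{Ex}(\mathsf{CM}(R)) = \mathsf{AR}(\mathsf{CM}(R))$ is an equality of subgroups of the free abelian group $\mathsf{G}(\mathsf{CM}(R))$. Tensoring this equality with $\mathbb{Q}$ over $\mathbb{Z}$ at once yields $\mathsf{Ex}(\mathsf{CM}(R)) \otimes_{\mathbb{Z}} \mathbb{Q} = \mathsf{AR}(\mathsf{CM}(R)) \otimes_{\mathbb{Z}} \mathbb{Q}$, which is precisely the hypothesis of Theorem~\ref{1}. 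Theorem~\ref{1} then supplies the key finiteness statement: there are only finitely many isomorphism classes of indecomposable summands of first syzygies of Cohen--Macaulay $R$-modules.

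It remains to upgrade this to finite $\mathsf{CM}$ type, and for this I would prove the assertion quoted in the text that, when $R$ is Gorenstein, every Cohen--Macaulay module is itself a first syzygy of a Cohen--Macaulay module. Given $M \in \mathsf{CM}(R)$, I would set $M^{\ast} = \Hom(M,R)$, choose a surjection $F \twoheadrightarrow M^{\ast}$ from a free module, and dualize the resulting exact sequence $0 \to \syz M^{\ast} \to F \to M^{\ast} \to 0$. A depth count shows $\syz M^{\ast}$ is Cohen--Macaulay, and because $R$ is Gorenstein the modules $\Ext^{i}(M^{\ast},R)$ vanish for $i>0$ while the biduality map $M \to M^{\ast\ast}$ is an isomorphism; hence dualizing produces a short exact sequence $0 \to M \to F^{\ast} \to N \to 0$ with $F^{\ast}$ free and $N = (\syz M^{\ast})^{\ast}$ again Cohen--Macaulay. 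This exhibits $M$ as the first syzygy $\syz N$ of the Cohen--Macaulay module $N$.

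Combining the two steps finishes the proof: every indecomposable Cohen--Macaulay $R$-module is (trivially a summand of) a first syzygy of a Cohen--Macaulay module, so by the finiteness conclusion of Theorem~\ref{1} there can be only finitely many such isomorphism classes, which is exactly the statement that $R$ is of finite $\mathsf{CM}$ type. The genuine mathematical content is carried entirely by Theorem~\ref{1}; the only point requiring care here is the Gorenstein syzygy-representability fact, where one must check that $N$ is Cohen--Macaulay and that biduality is an isomorphism, both of which rest on the vanishing of $\Ext^{i}(-,R)$ on $\mathsf{CM}(R)$ over a Gorenstein ring.
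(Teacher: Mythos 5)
Your proposal is correct and follows the paper's own route: the paper likewise deduces the corollary from Theorem~\ref{1} by observing that over a Gorenstein ring every Cohen--Macaulay module is a first syzygy of a Cohen--Macaulay module (a fact the paper states without proof, and which your duality argument via $\Ext^i(-,R)$-vanishing and reflexivity correctly establishes). The only detail worth noting is that your sequence $0 \to M \to F^{\ast} \to N \to 0$ exhibits $M$ as a syzygy of $N$ only up to free summands, which suffices since $\syz$ is defined up to free summands and $\mathsf{\Omega CM}(R)$ is closed under direct summands.
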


When $R$ is a two dimensional complete local ring and $k$ is algebraically closed, it is shown in \cite[Corollary 3.3]{DITV} that $R$ has a finite number of isomorphism classes of indecomposable summands of syzygies of Cohen-Macaulay modules if and only if $R$ is a rational singularity. This fact provides the following corollary.

\begin{cor}
Assume that $R$ is a complete local ring of dimension two with $k$ algebraically closed. If $\mathsf{Ex}(\mathsf{CM}(R))\otimes_{\mathbb{Z}} \mathbb{Q}=\mathsf{AR}(\mathsf{CM}(R))\otimes_{\mathbb{Z}} \mathbb{Q}$, then $R$ is a rational singularity.
\end{cor}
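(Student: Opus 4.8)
The plan is to deduce this corollary by a direct combination of the main Theorem~\ref{1} with the characterization of two-dimensional rational singularities recorded immediately above it from \cite[Corollary 3.3]{DITV}. The standing hypotheses of the note already guarantee that $R$ is a Cohen--Macaulay local ring with an isolated singularity, and the corollary supplies the additional assumptions that $R$ is complete of dimension two with $k$ algebraically closed. Thus both ingredients are simultaneously applicable, and the argument reduces to verifying that the hypotheses line up and then chaining the two implications.

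First I would invoke Theorem~\ref{1}. The assumption $\mathsf{Ex}(\mathsf{CM}(R))\otimes_{\mathbb{Z}} \mathbb{Q}=\mathsf{AR}(\mathsf{CM}(R))\otimes_{\mathbb{Z}} \mathbb{Q}$ in the corollary is exactly the hypothesis of that theorem, so it yields that there are only finitely many isomorphism classes of indecomposable summands of (first) syzygies of Cohen--Macaulay $R$-modules. Next I would appeal to \cite[Corollary 3.3]{DITV}, which asserts that for a two-dimensional complete local ring $R$ with $k$ algebraically closed, $R$ admits only finitely many isomorphism classes of indecomposable summands of syzygies of Cohen--Macaulay modules \emph{if and only if} $R$ is a rational singularity. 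The finiteness produced in the previous step therefore forces $R$ to be a rational singularity, which completes the proof.

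Since the argument is a formal composition of two established results, I do not expect any substantial obstacle. The only point meriting care is confirming that the hypotheses of Theorem~\ref{1} and of \cite[Corollary 3.3]{DITV} hold together, which is immediate from the standing assumptions combined with the extra conditions stated in the corollary; in particular, the rational (as opposed to integral) equality of the subgroups is precisely what Theorem~\ref{1} requires, so no strengthening of the hypothesis is needed.
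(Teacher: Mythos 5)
Your proof is correct and matches the paper's own argument exactly: the paper derives this corollary by applying Theorem~\ref{1} to obtain finiteness of indecomposable summands of syzygies of Cohen--Macaulay modules, and then invoking the characterization of two-dimensional rational singularities in \cite[Corollary 3.3]{DITV}. Your hypothesis-checking is also the right (and only) point of care, so nothing is missing.
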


In the rest of this note, we give a proof of Theorem \ref{1}.

\section{Proof of our theorem}

As in the introduction, we always assume that $(R,\mathfrak{m},k)$ is a Cohen-Macaulay local ring with an isolated singularity. All $R$-modules are assumed to be finitely generated.

We denote by $\underline{\mathsf{mod}} (R)$ (resp. $\underline{\mathsf{CM}} (R)$) the stable category of $\mathsf{mod}(R)$ (resp. $\mathsf{CM}(R)$). These categories are defined in such a way that the objects are the same as those of $\mathsf{mod}(R)$ (resp. $\mathsf{CM}(R)$), and for objects $M, N$, the set of morphisms from $M$ to $N$ is $\sHom_R(M,N)$, defined to be the quotient of $\mathsf{Hom}_R(M,N)$ by the $R$-submodule consisting of homomorphisms factoring through free $R$-modules.

To give a proof of Theorem \ref{1}, we prepare several lemmas. The first one is given in \cite[Lemma 2.1]{H}.

\begin{lem} \label{2}
There exists a Cohen-Macaulay $R$-module $X$ such that for any non-free Cohen-Macaulay $R$-module $M$ one has $\sHom_R(M,X)\not=0$.
\end{lem}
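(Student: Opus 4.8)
Set $d=\dim R$ and take $X=\syz^d k$, the $d$-th syzygy of the residue field; since forming a syzygy raises depth by one until it reaches $\operatorname{depth} R=d$, the module $\syz^d k$ has depth $d$ and so lies in $\mathsf{CM}(R)$. Discarding free summands, I may also assume the given non-free $M$ has no free direct summand. The core of the plan is to convert $\sHom_R(M,\syz^d k)$ into Betti numbers by means of the Auslander transpose $\mathrm{Tr}\,M$, using the standard natural isomorphism $\sHom_R(M,N)\cong\operatorname{Tor}_1^R(\mathrm{Tr}\,M,N)$: it comes from the four-term exact sequence in which the natural map $\Hom_R(M,R)\otimes_R N\to\Hom_R(M,N)$ has image precisely the homomorphisms factoring through free modules, so that the stable $\sHom$ is exactly the cokernel, namely $\operatorname{Tor}_1^R(\mathrm{Tr}\,M,N)$.

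Feeding in $N=\syz^d k$ and shifting dimensions along the short exact sequences defining the syzygies of $k$ (each free middle term kills the higher $\operatorname{Tor}$), I expect to reach
\[
\sHom_R(M,\syz^d k)\;\cong\;\operatorname{Tor}_{d+1}^R(\mathrm{Tr}\,M,k)\;\cong\;k^{\beta_{d+1}(\mathrm{Tr}\,M)} .
\]
Dualizing a minimal presentation of $M$ identifies $M^{*}:=\Hom_R(M,R)$ with a second syzygy of $\mathrm{Tr}\,M$, so for $d\ge 1$ the exponent equals $\beta_{d-1}(M^{*})$ (for $d=0$ it is $\beta_0(M)>0$ outright). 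Hence the lemma reduces to the single statement $\beta_{d-1}(M^{*})\neq 0$. This is automatic for $d\le 1$, where it only asks $M^{*}\neq 0$ (true since an isolated singularity of dimension $\le 1$ is reduced and $M$ is torsion-free), and for $d\ge 2$ it is guaranteed as soon as $M^{*}$ has infinite projective dimension, because infinite projective dimension forces every Betti number to be nonzero.

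Showing that $M^{*}$ has infinite projective dimension is the step I expect to be the main obstacle, and it is here that the isolated-singularity hypothesis is indispensable. When $R$ is Gorenstein the point is immediate: $\Hom_R(-,R)$ is an exact duality on $\mathsf{CM}(R)$, so $M^{*}$ is again a non-free maximal Cohen-Macaulay module and therefore has infinite projective dimension. In general $\Hom_R(-,R)$ is no longer exact, and the plan is to exploit that over an isolated singularity of dimension at least two every maximal Cohen-Macaulay module is reflexive: the natural map $M\to M^{**}$ is an isomorphism on the punctured spectrum, so its kernel and cokernel have finite length and hence vanish because $\operatorname{depth} M\ge 2$ and $\operatorname{depth} M^{**}\ge\min(2,d)$. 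This makes $M^{*}$ a nonzero non-free module, and one then wants to descend to low dimension by cutting with a general nonzerodivisor $x\in\mathfrak m$, for which $R/x$ is again Cohen-Macaulay with isolated singularity and $M/xM$ a non-free maximal Cohen-Macaulay $R/x$-module. The genuinely delicate point I anticipate is the interaction of $\Hom_R(-,R)$ with this reduction: passing to $R/x$ commutes with dualization only up to the finite-length correction coming from $\Ext^1_R(M,R)$, and controlling this correction (or circumventing it, for instance by using the canonical module in place of $R$) is the technical heart of the argument; everything preceding it is purely homological and needs only that $R$ is Cohen-Macaulay local.
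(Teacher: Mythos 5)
The paper itself contains no proof of this lemma: it is imported wholesale from \cite[Lemma 2.1]{H}, and your overall route is the same as that source's --- take $X=\syz^d k$, identify $\sHom_R(M,X)\cong\operatorname{Tor}^R_1(\mathsf{Tr}\,M,\syz^dk)\cong\operatorname{Tor}^R_{d+1}(\mathsf{Tr}\,M,k)$, and reduce the lemma to a Betti-number nonvanishing. Everything you actually carry out is correct: the Tor identification, the dimension shifting, the identity $\beta_{d+1}(\mathsf{Tr}\,M)=\beta_{d-1}(M^*)$ (valid because you stripped free summands from $M$, so the dualized minimal presentation stays minimal), the case $d\le 1$, and the Gorenstein case.

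The problem is that for a non-Gorenstein isolated singularity of dimension $d\ge 2$ --- precisely the generality this paper needs, since the Gorenstein case is Hiramatsu's own setting --- you never prove the claim everything was reduced to, namely $\operatorname{pd}_RM^*=\infty$ (equivalently $\operatorname{pd}_R\mathsf{Tr}\,M=\infty$); you explicitly defer it as ``the technical heart.'' That is a genuine gap, and the hypersurface-section plan you sketch for closing it has two independent defects: for $d\ge 2$ a general section $R/xR$ of an isolated singularity need not have an isolated singularity (so the inductive hypothesis is unavailable without a local Bertini-type theorem), and, as you yourself concede, $\Hom_R(-,R)$ does not commute with the reduction. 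The gap can in fact be closed without ever leaving $R$, and this is exactly where the isolated-singularity hypothesis enters. Put $N=\mathsf{Tr}\,M$, so that $\mathsf{Tr}\,N\cong M$ up to free summands; since $M$ is locally free on the punctured spectrum, so is $N$, hence $\Ext^j_R(N,R)$ has finite length for every $j\ge 1$. Dualizing a minimal free resolution of $N$ gives, for each $i\ge 0$, exact sequences
\[
0 \to \Ext^{i+1}_R(N,R) \to \mathsf{Tr}\,\syz^i N \to Y_i \to 0,
\qquad
0 \to Y_i \to F_i \to \mathsf{Tr}\,\syz^{i+1} N \to 0,
\]
with $F_i$ free. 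Starting from $\operatorname{depth}\mathsf{Tr}\,N=\operatorname{depth}M=d$ and using that a nonzero finite-length submodule of a module of positive depth cannot exist, one shows by induction that $\operatorname{depth}\mathsf{Tr}\,\syz^iN\ge d-i$ and $\Ext^{i+1}_R(N,R)=0$ for $0\le i\le d-1$. Now if $\operatorname{pd}_RN$ were finite, it would be at most $d$ by Auslander--Buchsbaum, and the vanishing $\Ext^j_R(N,R)=0$ for $1\le j\le d$ would force $N$ to be free, hence $M\cong\mathsf{Tr}\,N$ free up to free summands --- a contradiction. (For $d=2$ there is also a one-line patch you nearly had in hand: $M^*$ is a second syzygy, hence maximal Cohen--Macaulay when $d=2$, and finite projective dimension would then make $M^*$ free, contradicting $M\cong M^{**}$ non-free; the genuinely open part of your argument is $d\ge 3$.)
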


We denote by $\syz M$ the first syzygy of $R$-module $M$, and by $\mathsf{Tr} M$ the (Auslander) transpose of $M$; see \cite[Definition (3.5)]{Y}. The modules $\syz M$ and $\mathsf{Tr}$ are uniquely determined by $M$ up to free summands.

We have the lemma below; see \cite[Proposition 2.7]{MT} for instance.

\begin{lem} \label{3}
Let $0 \to A \to B \to C \to 0$ be an exact sequence in $\mathsf{mod}(R)$ and $U$ an $R$-module. Then there exists a long exact sequence
\begin{align*}
\sHom_R(U,\syz A) \to \sHom_R(U,\syz B) \to \sHom_R(U, \syz C) \to \\
\sHom_R(U, A) \to \sHom_R(U,B) \to \sHom_R(U,C).
\end{align*}
\end{lem}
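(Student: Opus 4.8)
The plan is to deduce the six-term sequence from one elementary lifting principle, applied to three short exact sequences built from the given one. First I would fix convenient representatives of the syzygies by the horseshoe lemma: choose surjections $\phi_A\colon F_A\twoheadrightarrow A$ and $\phi_C\colon F_C\twoheadrightarrow C$ with $F_A,F_C$ free, put $F_B=F_A\oplus F_C$, and lift $\phi_C$ along $B\to C$ to a map $\sigma\colon F_C\to B$. This produces a commutative diagram with exact rows and columns whose top row is
\[
0\to \syz A\to \syz B\to \syz C\to 0 ,
\]
with $\syz B=\{(x,y)\in F_A\oplus F_C : \phi_A(x)+\sigma(y)=0\}$, the map $\syz B\to\syz C$ being $(x,y)\mapsto y$ and $\syz A\to\syz B$ induced by $F_A\hookrightarrow F_B$. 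Since $\sHom_R(U,-)$ kills free modules and is additive, while syzygies are only defined up to free summands, working with these explicit representatives costs nothing.

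The key tool is the following observation: for any exact sequence $0\to X\to Y\to Z\to 0$ in $\mathsf{mod}(R)$, the sequence $\sHom_R(U,X)\to \sHom_R(U,Y)\to \sHom_R(U,Z)$ is exact at the middle. Indeed, given $\beta\colon U\to Y$ whose composite to $Z$ factors through a free module $F$, projectivity of $F$ lets me lift $F\to Z$ to $F\to Y$; subtracting this lift from $\beta$ gives a map into $Y$ that vanishes in $Z$, hence factors through $X$, so $[\beta]$ comes from $\sHom_R(U,X)$. Applying this to the syzygy sequence above and to $0\to A\to B\to C\to 0$ already yields exactness at $\sHom_R(U,\syz B)$ and at $\sHom_R(U,B)$.

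To build the connecting map and cover $\sHom_R(U,A)$, I would form the pullback $P=B\times_C F_C$. As $F_C$ is free the sequence $0\to A\to P\to F_C\to 0$ splits, so $P\cong A\oplus F_C$, and the other projection gives
\[
0\to \syz C\xrightarrow{(\iota,\,j)} A\oplus F_C\to B\to 0 ,
\]
where $j\colon\syz C\hookrightarrow F_C$ is the inclusion and $\iota=\sigma|_{\syz C}\colon\syz C\to A$ (note $\sigma$ sends $\syz C$ into $A$). Because $\sHom_R(U,F_C)=0$ we may identify $\sHom_R(U,A\oplus F_C)=\sHom_R(U,A)$; the middle-exactness tool applied to this pullback sequence then defines the connecting map $\delta$ as composition with the stable class of $\iota$ and furnishes exactness at $\sHom_R(U,A)$, with $\sHom_R(U,A)\to\sHom_R(U,B)$ induced by $A\hookrightarrow B$ as desired. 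Replacing $\sigma$ by another lift changes $\iota$ by a map through $F_C$, so $\delta$ is well defined on $\underline{\mathsf{mod}}(R)$.

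The main obstacle is exactness at $\sHom_R(U,\syz C)$, where the explicit horseshoe description is essential. That $\delta\circ(\syz B\to\syz C)_*=0$ holds because for $(x,y)\in\syz B$ one has $\iota(y)=\sigma(y)=-\phi_A(x)$, so the composite $\syz B\to\syz C\to A$ runs through $\phi_A\colon F_A\to A$. Conversely, if $\gamma\colon U\to\syz C$ has $\iota\gamma=\sigma\gamma$ factoring through a free module, I would lift $-\sigma\gamma\colon U\to A$ along the surjection $\phi_A$ — a lift exists precisely because $\sigma\gamma$ factors through a projective — obtaining $x_0\colon U\to F_A$ with $\phi_A x_0=-\sigma\gamma$. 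Then $\beta=(x_0,\gamma)$ lands in $\syz B$ and maps to $\gamma$ under $\syz B\to\syz C$, so $[\gamma]$ lies in the image of $\sHom_R(U,\syz B)$. Collecting the four exactness statements along the maps coming from the two displayed sequences and from $0\to A\to B\to C\to 0$ yields the asserted long exact sequence; the points needing the most care are the well-definedness of $\delta$ under all choices and the sign bookkeeping in the pullback.
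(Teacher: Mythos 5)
Your proof is correct, but it cannot be said to "match" the paper's, because the paper gives no proof of Lemma \ref{3} at all: it simply cites \cite[Proposition 2.7]{MT}. Your argument is a correct, self-contained reconstruction of that cited fact. The horseshoe lemma gives the exact sequence $0\to\syz A\to\syz B\to\syz C\to 0$ on explicit representatives; your lifting principle (middle-exactness of $\sHom_R(U,-)$ on any short exact sequence, valid because free modules are projective and stably zero maps lift along surjections) settles exactness at $\sHom_R(U,\syz B)$ and at $\sHom_R(U,B)$; the split pullback $B\times_C F_C\cong A\oplus F_C$, together with $\sHom_R(U,F_C)=0$, produces the connecting map and exactness at $\sHom_R(U,A)$; and your explicit lift of $-\sigma\gamma$ through $\phi_A$ gives the remaining exactness at $\sHom_R(U,\syz C)$. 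All four interior exactness points are thus covered, and the sign issue you flag is genuinely harmless: under the natural splitting $(b,y)\mapsto(b-\sigma(y),y)$ the map $\iota$ is $-\sigma|_{\syz C}$ rather than $\sigma|_{\syz C}$, but composing with $-1_A$ changes no kernels or images, and your subsequent computations are internally consistent with your convention. Your observation that a different lift $\sigma'$ changes $\iota$ by a map factoring through the free module $F_C$ correctly disposes of well-definedness, though that is not even needed for the bare existence statement. What your route buys is an explicit description of the connecting map as composition with the stable class of $\sigma|_{\syz C}$, which is essentially what the paper relies on later anyway (the proof of Proposition \ref{5} invokes the diagram from the proof of \cite[Proposition 2.7 (2)]{MT}); what the citation buys the paper is brevity and access to the more general statement in \cite{MT}, where the sequence continues indefinitely to the left through higher syzygies.
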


Here, we define $\syz^{-1} M$ to be the module $\mathsf{Tr}\syz\mathsf{Tr} M$ for an $R$-module $M$. The assignments $M \mapsto\syz M$, $M \mapsto \mathsf{Tr} M$ and $M \mapsto \syz^{-1} M$ define additive endofunctors of $\underline{\mathsf{mod}} (R)$. Moreover, the following lemma holds; see \cite[Corollary 3.3]{AR2}.

\begin{lem} \label{4}
Let $X,Y$ be $R$-modules. Then there is an isomorphism 
\[
\sHom_R(X,\syz Y) \to \sHom_R(\syz^{-1}X, Y),
\]
which is natural in $X,Y$ (i.e. one has an adjoint pair $(\syz^{-1}, \syz): \underline{\mathsf{mod}}(R) \to \underline{\mathsf{mod}}(R)$.)
\end{lem}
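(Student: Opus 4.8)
The plan is to reduce the asserted adjunction to a two-step dimension shift for $\operatorname{Tor}$, using the Auslander--Bridger description of stable $\Hom$ as a first $\operatorname{Tor}$ of the transpose. Concretely, I would first record the natural isomorphism
\[
\sHom_R(M,N)\cong\operatorname{Tor}_1^R(\mathsf{Tr} M,N)
\]
for all finitely generated $M,N$. To obtain it, take a minimal presentation $P_1\xrightarrow{\partial}P_0\to M\to 0$; applying $\Hom_R(-,R)$ yields the presentation $P_0^{\ast}\xrightarrow{\partial^{\ast}}P_1^{\ast}\to\mathsf{Tr} M\to 0$ together with the identification $\ker\partial^{\ast}\cong M^{\ast}$. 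Using $\Hom_R(P_i,N)\cong P_i^{\ast}\otimes_R N$, one sees $\Hom_R(M,N)=\ker(\partial^{\ast}\otimes_R N)$. Extending the presentation of $\mathsf{Tr} M$ to a projective resolution and computing $H_1(-\otimes_R N)$ then exhibits $\operatorname{Tor}_1^R(\mathsf{Tr} M,N)$ as this same kernel modulo the image of $M^{\ast}\otimes_R N\to P_0^{\ast}\otimes_R N$, which, via the evaluation map, is precisely the subgroup of homomorphisms factoring through a free module. Both sides are functorial in $M$ and $N$, so the isomorphism is natural.

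Granting this, I would compute, naturally in $X$ and $Y$,
\[
\sHom_R(X,\syz Y)\cong\operatorname{Tor}_1^R(\mathsf{Tr} X,\syz Y)\cong\operatorname{Tor}_2^R(\mathsf{Tr} X,Y),
\]
the second isomorphism arising from $0\to\syz Y\to F\to Y\to 0$ with $F$ free, whose $\operatorname{Tor}$ long exact sequence collapses because $\operatorname{Tor}_i^R(-,F)=0$ for $i\geq 1$. Symmetrically, $0\to\syz\mathsf{Tr} X\to F'\to\mathsf{Tr} X\to 0$ gives $\operatorname{Tor}_1^R(\syz\mathsf{Tr} X,Y)\cong\operatorname{Tor}_2^R(\mathsf{Tr} X,Y)$. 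Since $\mathsf{Tr}^2\cong\operatorname{id}$ on $\underline{\mathsf{mod}}(R)$, we have $\mathsf{Tr}\syz^{-1}X=\mathsf{Tr}\mathsf{Tr}\syz\mathsf{Tr} X\cong\syz\mathsf{Tr} X$, whence
\[
\sHom_R(\syz^{-1}X,Y)\cong\operatorname{Tor}_1^R(\mathsf{Tr}\syz^{-1}X,Y)\cong\operatorname{Tor}_1^R(\syz\mathsf{Tr} X,Y)\cong\operatorname{Tor}_2^R(\mathsf{Tr} X,Y).
\]
Composing these natural isomorphisms with the previous display identifies $\sHom_R(X,\syz Y)$ with $\sHom_R(\syz^{-1}X,Y)$, which is the required adjunction.

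I expect the main obstacle to be the bookkeeping of naturality rather than any single hard idea. The Auslander--Bridger isomorphism must be checked to be natural simultaneously in both arguments, the subtlety being that $\mathsf{Tr}$ and $\syz$ become functorial only after passing to $\underline{\mathsf{mod}}(R)$, so one must confirm that homomorphisms factoring through free modules are carried to such homomorphisms at each stage and that the dimension-shift isomorphisms respect this. Once every arrow above is known to be natural in $X$ and $Y$, the composite is a natural isomorphism and yields the adjoint pair $(\syz^{-1},\syz)$; note that neither a finiteness hypothesis nor the isolated-singularity assumption on $R$ is used in this lemma.
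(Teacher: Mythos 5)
Your argument is correct, but note that the paper itself contains no proof of Lemma \ref{4} to compare against: the statement is quoted directly from \cite[Corollary 3.3]{AR2}, so your write-up is a self-contained substitute for that citation. Your route rests on the Auslander--Bridger identification $\sHom_R(M,N)\cong\mathsf{Tor}_1^R(\mathsf{Tr}M,N)$ --- which, pleasantly, is the very same tool the paper invokes via \cite[Lemma (3.9)]{Y} just before Proposition \ref{5} to see that $[M,N]$ is finite --- combined with dimension shifting in each variable of $\mathsf{Tor}$ and the stable isomorphism $\mathsf{Tr}\mathsf{Tr}\cong\mathrm{id}$. Each step checks out: the image of $M^{\ast}\otimes_R N$ in $\Hom_R(M,N)$ is exactly the set of maps factoring through free modules; $\mathsf{Tor}_{\geq 1}$ vanishes on free modules and kills morphisms factoring through them, so it descends to $\underline{\mathsf{mod}}(R)$ and converts stable isomorphisms such as $\mathsf{Tr}\syz^{-1}X\cong\syz\mathsf{Tr}X$ into genuine isomorphisms; and your two dimension shifts meet at $\mathsf{Tor}_2^R(\mathsf{Tr}X,Y)$. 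The naturality bookkeeping you flag is real but routine: a morphism $Y\to Y'$ lifts to the free covers $F\to Y$, $F'\to Y'$, the lift is unique up to a map factoring through a free module, and such maps induce zero on $\mathsf{Tor}_{\geq 1}$, so all connecting isomorphisms are natural with respect to stable maps; the same reasoning handles the $X$-variable. Two small caveats: finite generation of the modules \emph{is} used (both for $\mathsf{Tr}$ and for $P^{\ast}\otimes_R N\cong\Hom_R(P,N)$), so your closing remark should be read as saying only that the Cohen--Macaulay, isolated-singularity, and finite-type hypotheses are irrelevant here; and minimality of the chosen presentation is unnecessary (any finite free presentation works, which is why the lemma holds beyond the local setting, in the generality of \cite{AR2}). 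What your approach buys is a proof staying entirely within machinery the paper already deploys, making the lemma's independence from the standing hypotheses on $R$ transparent.
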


We denote by $\mathsf{\Omega CM}(R)$ the full subcategory of $\mathsf{mod}(R)$ consisting of first syzygies of Cohen-Macaulay $R$-modules. 

\begin{rem}
It is easy to see that $\mathsf{\Omega CM}(R)$ is closed under direct summands. In particular, the following are equivalent.
\begin{enumerate}[\rm(1)]
\item
There are only finitely many non-isomorphic indecomposable modules in $\mathsf{\Omega CM}(R)$.
\item
There are only finitely many non-isomorphic indecomposable summands of modules in $\mathsf{\Omega CM}(R)$.
\end{enumerate}
If one/both of these conditions is/are satisfied, we say that $R$ is {\it of finite $\mathsf{\Omega CM}$ type}.
\end{rem}

Now we give some properties of modules in $\mathsf{\Omega CM}(R)$.

\begin{lem} \label{04}
If $M \in \mathsf{\Omega CM}(R)$, then $\syz^{-1} M$ is in $\mathsf{CM}(R)$ and $M \cong \syz \syz^{-1} M$ up to free summands.
\end{lem}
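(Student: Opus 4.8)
The plan is to reduce both assertions to a single short exact sequence relating $\syz^{-1}M$ to the Cohen--Macaulay module of which $M$ is a syzygy. First I would fix notation: since $M\in\mathsf{\Omega CM}(R)$, write $M\cong\syz C$ for some $C\in\mathsf{CM}(R)$, which I may assume has no free direct summands, so that there is a short exact sequence $0\to M\to F\to C\to 0$ with $F$ free. Applying the depth lemma to this sequence shows that $M$ is itself maximal Cohen--Macaulay, and since $M$ embeds in the free module $F$ it is torsionless, i.e. the biduality map $\sigma_M\colon M\to M^{**}$ is injective.

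For the second assertion I would use the Auslander--Bridger description underlying the unit of the adjunction in Lemma \ref{4}: writing out the transpose of a minimal presentation of $M$ gives $\syz\syz^{-1}M\cong\operatorname{im}(\sigma_M)$ up to free summands. As $M$ is torsionless, $\operatorname{im}(\sigma_M)\cong M$, and therefore $M\cong\syz\syz^{-1}M$ up to free summands, which is exactly the second claim.

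The heart of the matter is the first assertion. The key identity is $\syz^{-1}M=\syz^{-1}\syz C\cong\mathsf{Tr}\,W$, where $W:=\operatorname{im}(\sigma_{\mathsf{Tr} C})$; this follows from $\mathsf{Tr}\syz\cong\syz^{-1}\mathsf{Tr}$ together with the same formula applied to $\mathsf{Tr} C$, which yields $\syz\mathsf{Tr}\syz C\cong\syz\syz^{-1}\mathsf{Tr} C\cong\operatorname{im}(\sigma_{\mathsf{Tr} C})=W$. Now set $T:=\Ext^1_R(C,R)$, so that $0\to T\to\mathsf{Tr} C\to W\to 0$ is exact. The crucial point is that, because $R$ has an isolated singularity and $C$ is maximal Cohen--Macaulay, $C$ is free on the punctured spectrum, so $T$ has finite length; in particular $T^*=\Hom_R(T,R)=0$ (here $d:=\dim R\ge 1$, the case $d=0$ being trivial). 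Feeding this sequence into the six-term exact sequence for the transpose, $0\to W^*\to(\mathsf{Tr} C)^*\to T^*\to\mathsf{Tr} W\to\mathsf{Tr}\,\mathsf{Tr}\,C\to\mathsf{Tr} T\to 0$, and using $T^*=0$ together with $\mathsf{Tr}\,\mathsf{Tr}\,C\cong C$, I obtain, up to free summands, a short exact sequence
\[
0\to\syz^{-1}M\to C\to\mathsf{Tr}\,T\to 0 .
\]
Finally I would estimate depths. Since $T$ is of finite length, $T^*=0$ forces the map $Q_0^*\to Q_1^*$ in $0\to T^*\to Q_0^*\to Q_1^*\to\mathsf{Tr} T\to 0$ to be injective, so $\mathsf{Tr}\,T$ has projective dimension at most one; by Auslander--Buchsbaum $\operatorname{depth}\mathsf{Tr}\,T\ge d-1$. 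The depth lemma applied to the displayed sequence then gives $\operatorname{depth}\syz^{-1}M\ge\min\{\operatorname{depth} C,\operatorname{depth}\mathsf{Tr}\,T+1\}=d$, so $\syz^{-1}M$ is maximal Cohen--Macaulay.

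I expect the main obstacle to be the production of the displayed sequence $0\to\syz^{-1}M\to C\to\mathsf{Tr}\,T\to 0$: it requires organizing the transpose--syzygy identities correctly and, above all, recognizing that the discrepancy between $\syz^{-1}\syz C$ and $C$ is precisely $\mathsf{Tr}$ of the finite-length module $\Ext^1_R(C,R)$. This defect term is invisible to a naive depth count on the cosyzygy sequence (which only yields depth $\ge d-1$), but its transpose has projective dimension $\le 1$ and hence depth $\ge d-1$, which is exactly what the depth lemma needs to push $\syz^{-1}M$ up to full depth $d$. Once this structural sequence is in hand, both conclusions drop out immediately.
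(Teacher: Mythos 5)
Your proof is correct, but it takes a genuinely different route from the paper's, whose entire proof consists of two citations of Auslander--Bridger: the proof of \cite[Proposition 2.21]{AB} produces an exact sequence $0 \to R^{\oplus a} \to \syz^{-1}M \oplus R^{\oplus b} \to N \to 0$, so $\syz^{-1}M$ is Cohen--Macaulay at once (an extension of Cohen--Macaulay modules is Cohen--Macaulay), and \cite[Theorem 2.17]{AB} gives $M \cong \syz\syz^{-1}M$ up to free summands because $M$ is a syzygy. Your handling of the second assertion, via the identity $\syz\syz^{-1}M \cong \operatorname{im}(\sigma_M)$ for the biduality map $\sigma_M \colon M \to M^{**}$ together with torsionlessness of $M$, is essentially a re-derivation of that Theorem 2.17, so the difference there is only self-containedness. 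The genuine divergence is in the first assertion: where Auslander--Bridger exhibit $\syz^{-1}M$ (up to free summands) as an extension of $C$ by a free module, you exhibit it as a submodule of $C$ with cokernel $\mathsf{Tr}\,T$, where $T = \Ext^1_R(C,R)$, and conclude via $\operatorname{pd}_R \mathsf{Tr}\,T \leq 1$, Auslander--Buchsbaum, and the depth lemma. Every step checks out: the formula $\syz\syz^{-1}X \cong \operatorname{im}(\sigma_X)$ up to free summands holds for arbitrary $X$ (so applying it to $\mathsf{Tr}\,C$ is legitimate), the kernel of $\sigma_{\mathsf{Tr} C}$ is indeed $\Ext^1_R(C,R)$ by the standard Auslander--Bridger four-term sequence, the six-term transpose sequence of a short exact sequence is valid up to free summands (horseshoe construction), and free summands are harmless in the depth count. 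The trade-off is that your route genuinely uses the isolated-singularity hypothesis (to force $T$ to have finite length, hence $T^* = 0$ when $\dim R \geq 1$), which the paper's citation-based proof never touches --- the lemma is in fact true over an arbitrary Cohen--Macaulay local ring --- while in exchange your argument is self-contained and makes completely explicit the defect module $\mathsf{Tr}\,\Ext^1_R(C,R)$ that measures the failure of $\syz^{-1}\syz C \cong C$, a term that is invisible in the paper's approach.
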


\begin{proof}
Since $M$ is in $\mathsf{\Omega CM}(R)$, there is a Cohen-Macaulay module $N$ such that $M$ is the first syzygy of $N$. By the proof of \cite[Proposition 2.21]{AB}, there is an exact sequence $0 \to R^{\oplus a} \to \syz^{-1} M \oplus R^{\oplus b} \to N \to 0$ with some integers $a, b$. This implies that $\syz^{-1} M$ is a Cohen-Macaulay module. As $M$ is a syzygy, $M$ is isomorphic to $\syz\syz^{-1} M=\mathsf{\Omega Tr \Omega Tr} M$ up to free summands by \cite[Theorem 2.17]{AB}.
\end{proof}

Next we investigate the non-free part of a given module.

\begin{lem} \label{004}
Let $M$ be a finitely generated $R$-module, and $\widehat{R}$ the completion of $R$.
\begin{enumerate}[\rm(1)]
\item
$M$ has an $R$-free summand if and only if $M\otimes_R \widehat{R}$ has a $\widehat{R}$-free summand. 
\item
There is a unique decomposition $M \cong \underline{M}\oplus F$ of $M$ up to isomorphism with $F$ free such that $\underline{M}$ has no free summands. We call this module $\underline{M}$ the {\rm non-free part} of $M$.
\item
Let $A,B$ be finitely generated $R$-modules. If $A$ is a direct summand of $B$, then $\underline{A}$ is a direct summand of $\underline{B}$.
\end{enumerate}
\end{lem}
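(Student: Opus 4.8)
The plan is to prove the three statements in order, using part~(1)---detection of free summands after completion---as the engine for the uniqueness in~(2) and the summand statement in~(3). For part~(1) I would argue via the \emph{trace ideal} $\tau_R(M)=\sum_{f\in\Hom_R(M,R)}f(M)$, claiming that $M$ has a free summand if and only if $\tau_R(M)=R$. Indeed, a projection onto a free summand shows $1\in\tau_R(M)$; conversely, if $\tau_R(M)=R$ then writing $1=\sum_i f_i(x_i)$ and using that $R$ is local forces some $f_i(x_i)$ to be a unit, so that $f_i\colon M\to R$ is surjective and hence split. The key observation is that the trace ideal commutes with completion: since $M$ is finitely presented and $\widehat{R}$ is flat over $R$, one has $\Hom_R(M,R)\otimes_R\widehat{R}\cong\Hom_{\widehat{R}}(\widehat{M},\widehat{R})$, and taking images of the evaluation maps gives $\tau_R(M)\widehat{R}=\tau_{\widehat{R}}(\widehat{M})$. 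As $\widehat{R}$ is faithfully flat over the local ring $R$, an ideal $I\subseteq R$ equals $R$ if and only if $I\widehat{R}=\widehat{R}$; chaining these equivalences shows that $M$ has a free summand if and only if $\widehat{M}$ does.

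For part~(2), existence follows by repeatedly splitting off rank-one free summands: each such step drops the minimal number of generators by one, so the process terminates in a decomposition $M\cong\underline{M}\oplus F$ with $\underline{M}$ free-summand-free and $F$ free. For uniqueness I would complete and invoke Krull--Schmidt, which holds over the complete local ring $\widehat{R}$. Given $M\cong N_1\oplus R^{a}\cong N_2\oplus R^{b}$ with $N_1,N_2$ having no free summands, part~(1) ensures that $\widehat{N_1}$ and $\widehat{N_2}$ have no $\widehat{R}$-free summands, so in the unique indecomposable decomposition of $\widehat{M}$ every copy of the indecomposable $\widehat{R}$ comes from the free parts; hence $a=b$ and $\widehat{N_1}\cong\widehat{N_2}$. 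I would then descend using the standard fact that finitely generated modules over a Noetherian local ring are isomorphic if and only if their completions are: choosing $\phi\colon N_1\to N_2$ whose completion agrees modulo $\mathfrak{m}$ with a fixed isomorphism $\psi\colon\widehat{N_1}\to\widehat{N_2}$, the endomorphism $\psi^{-1}\widehat{\phi}$ of $\widehat{N_1}$ reduces to the identity modulo $\mathfrak{m}$ and is thus an isomorphism by Nakayama, so $\widehat{\phi}$, and hence $\phi$, is an isomorphism.

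For part~(3), write $B\cong A\oplus C$ and decompose $A\cong\underline{A}\oplus R^{a}$ and $C\cong\underline{C}\oplus R^{c}$, so that $B\cong\underline{A}\oplus\underline{C}\oplus R^{a+c}$. It remains to check that $\underline{A}\oplus\underline{C}$ has no free summand: over $\widehat{R}$ the indecomposable summands of $\widehat{\underline{A}}\oplus\widehat{\underline{C}}$ are those of $\widehat{\underline{A}}$ and of $\widehat{\underline{C}}$, none of which is $\widehat{R}$ by part~(1), so Krull--Schmidt shows there is no free summand, and part~(1) transfers this back to $R$. By the uniqueness in part~(2) we conclude $\underline{B}\cong\underline{A}\oplus\underline{C}$, whence $\underline{A}$ is a direct summand of $\underline{B}$. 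The main obstacle is the uniqueness in part~(2): since $R$ need not satisfy Krull--Schmidt, the argument must pass to $\widehat{R}$, and the whole scheme hinges on the two descent facts---that the trace ideal commutes with completion and that isomorphism of completions descends to isomorphism over $R$---after which (2) and (3) follow formally from Krull--Schmidt over $\widehat{R}$.
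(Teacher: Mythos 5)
Your proof is correct, and for parts (2) and (3) it follows the same skeleton as the paper's: pass to the completion, use the Krull--Schmidt property over $\widehat{R}$ to identify the non-free parts and match the free ranks, then descend the isomorphism to $R$. The genuine difference lies in the ingredients. The paper does not prove part (1) or the descent of isomorphisms along $R\to\widehat{R}$ at all; it cites \cite[Corollary 1.15 (i), (ii)]{LW} for both. You instead prove them directly: part (1) via the trace ideal $\tau_R(M)=\sum_{f\in\Hom_R(M,R)}f(M)$, using that over a local ring $M$ has a free summand exactly when $\tau_R(M)=R$, that $\tau$ commutes with the flat base change to $\widehat{R}$ because $M$ is finitely presented, and that faithful flatness detects when an ideal is the unit ideal; and the descent statement by lifting a given isomorphism $\psi\colon\widehat{N_1}\to\widehat{N_2}$ modulo $\mathfrak{m}$ to a map $\phi$ defined over $R$ and applying Nakayama (your phrase ``is thus an isomorphism by Nakayama'' silently also uses that a surjective endomorphism of a finitely generated module is injective, but that is standard). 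Both arguments are sound, so your write-up is self-contained where the paper's is reference-driven. You also close a small gap the paper leaves in part (3): before invoking the uniqueness of (2) to conclude $\underline{B}\cong\underline{A}\oplus\underline{C}$, one must check that $\underline{A}\oplus\underline{C}$ has no free summand, which you verify via Krull--Schmidt over $\widehat{R}$ together with part (1); the paper passes over this point without comment.
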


\begin{proof}
(1) The assertion follows from  \cite[Corollary 1.15 (i)]{LW}.

(2) We can take a maximal free summand $R^{\oplus a}$ of $M$ to have a decomposition $M \cong M'\oplus R^{\oplus a}$ where $M'$ has no free summands. Suppose that there is another decomposition $M \cong M''\oplus R^{\oplus b}$ where $M''$ has no free summands. Taking the completion, we have $M\otimes_R \widehat{R} \cong (M'\otimes_R \widehat{R}) \oplus \widehat{R}^{\oplus a} \cong (M'' \otimes_R \widehat{R}) \oplus \widehat{R}^{\oplus b}$. By (1), $M'\otimes_R \widehat{R}$ and $M''\otimes_R \widehat{R}$ have no free summands. Since the Krull-Schmidt property holds over $\widehat{R}$, we have $M'\otimes_R \widehat{R} \cong M'' \otimes_R \widehat{R}$ and $a=b$. Using \cite[Corollary 1.15 (ii)]{LW}, we have $M'\cong M''$.

(3) Suppose that $A$ is a direct summand of $B$. Then $\underline{A}$ is also a direct summand of $B$. Hence we have a decomposition $B \cong \underline{A} \oplus C$. It follows from (2) that the non-free part $\underline{B}$ of $B$ is isomorphic to the module $\underline{A}\oplus \underline{C}$. In particular, $\underline{B}$ has $\underline{A}$ as a direct summand. 
\end{proof}

Since there is an isomorphism $\sHom_R(M,N) \cong \mathsf{Tor}^R_1(\mathsf{Tr}(M),N)$ for finitely generated $R$-modules $M$, $N$ (see \cite[Lemma (3.9)]{Y}) and since we assume that $R$ is an isolated singularity, we can show that the length of the $R$-module $\sHom_R(M,N)$ is finite for any $M, N$ in $\mathsf{CM}(R)$. We denote by $[M,N]$ the integer $\mathsf{length}_R(\sHom_R(M,N))$. The following proposition plays a key role in the proof of our theorem. For the definition and basic properties of an Auslander-Reiten sequence, we refer the reader to \cite{LW, Y}.

\begin{prop} \label{5}
Let $0 \to A \to B \to C \to 0$ be an Auslander-Reiten sequence, and $U$ be a non-free Cohen-Macaulay $R$-module. Then the following hold.
\begin{enumerate}[\rm(1)]
\item
The induced sequence $\sHom_R(U,B) \to \sHom_R(U,C) \to 0$ is exact if and only if $C$ is not a direct summand of $U$.
\item
Suppose that $U$ is an indecomposable module in $\mathsf{\Omega CM}(R)$. 
\begin{enumerate}[\rm(a)]
\item
If $U$ is not isomorphic to the non-free part of $\syz C$, then the induced sequence $0 \to \sHom_R(U,A) \to \sHom_R(U,B)$ is exact.
\item
If $[U,C]+[U,A]-[U,B]\not=0$, then $U$ is isomorphic to either $C$ or the non-free part of $\syz C$.
\end{enumerate}
\end{enumerate}
\end{prop}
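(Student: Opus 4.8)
The plan is to establish the three assertions in the order (1), (2)(a), (2)(b), deducing the later parts from the earlier ones together with the long exact sequence of Lemma \ref{3}. Throughout I would use the standard facts that the ending term $C$ of an Auslander-Reiten sequence in $\mathsf{CM}(R)$ is indecomposable and non-free, and that the epimorphism $B \to C$ is right almost split but not a split epimorphism. For part (1) I would argue directly from the right almost split property. For the ``if'' direction, given a nonzero class in $\sHom_R(U,C)$ represented by $g\colon U \to C$, the map $g$ cannot be a split epimorphism, since otherwise $C$ would be a direct summand of $U$; hence $g$ factors through $B \to C$ and its class lies in the image of $\sHom_R(U,B) \to \sHom_R(U,C)$. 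For the ``only if'' direction, suppose $C$ is a direct summand of $U$, with projection $p\colon U \to C$ and inclusion $j\colon C \to U$. If the class of $p$ lay in the image, then $p$ would equal $(B\to C)\circ h$ modulo a map factoring through a free module $F$; composing with $j$ and using that the resulting component $F \to C$ is not a split epimorphism (because $C$ is non-free), hence factors through $B \to C$, I would conclude that $\mathrm{id}_C$ factors through $B \to C$, contradicting that the sequence is not split.

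For part (2)(a), set $V = \syz^{-1} U$. By Lemma \ref{04}, $V \in \mathsf{CM}(R)$ and $U \cong \syz V$ up to free summands, and $V$ is non-free since $U$ is. Applying the adjunction of Lemma \ref{4} to the first three terms of the long exact sequence of Lemma \ref{3}, I would rewrite it via the natural isomorphisms $\sHom_R(U, \syz(-)) \cong \sHom_R(V, -)$, so that its first three terms become $\sHom_R(V,A) \to \sHom_R(V,B) \to \sHom_R(V,C)$, induced by the maps of the Auslander-Reiten sequence. Chasing exactness then shows that $\gamma\colon \sHom_R(U,A) \to \sHom_R(U,B)$ is injective if and only if the connecting map out of $\sHom_R(V,C)$ vanishes, which in turn happens exactly when $\sHom_R(V,B) \to \sHom_R(V,C)$ is surjective. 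Now part (1), applied to the non-free module $V$, identifies this with the condition that $C$ is not a direct summand of $V$. Finally, if $C$ were a direct summand of $V = \syz^{-1}U$, then by Lemma \ref{004}(3) the non-free part $\underline{\syz C}$ would be a direct summand of $\underline{\syz V} \cong U$; as $U$ is indecomposable and $\underline{\syz C}\neq 0$ (a non-free maximal Cohen-Macaulay module has infinite projective dimension, so its syzygy is non-free), this forces $U \cong \underline{\syz C}$. Contrapositively, $U \not\cong \underline{\syz C}$ yields the desired injectivity.

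Part (2)(b) then follows formally. Assuming $U \not\cong C$ and $U \not\cong \underline{\syz C}$, part (1) (with $U$ indecomposable, so that $C$ being a summand of $U$ means $U\cong C$) gives that $\sHom_R(U,B)\to\sHom_R(U,C)$ is surjective, part (2)(a) gives that $\sHom_R(U,A)\to\sHom_R(U,B)$ is injective, and the exactness of Lemma \ref{3} at the middle term $\sHom_R(U,B)$ supplies exactness in the middle. Hence
\[
0 \to \sHom_R(U,A) \to \sHom_R(U,B) \to \sHom_R(U,C) \to 0
\]
is exact, and additivity of length gives $[U,A]-[U,B]+[U,C]=0$, which is the contrapositive of (2)(b).

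I expect the main obstacle to be part (2)(a). Making the reduction to part (1) rigorous requires verifying that the adjunction isomorphisms of Lemma \ref{4} are compatible with the induced and connecting maps appearing in the long exact sequence, so that the rewritten sequence is genuinely the $\sHom_R(V,-)$-sequence of the Auslander-Reiten sequence; and the bookkeeping of free summands, via Lemmas \ref{04} and \ref{004}, that converts the summand condition ``$C \mid \syz^{-1}U$'' into the isomorphism $U \cong \underline{\syz C}$ must be carried out with care.
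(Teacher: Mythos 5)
Your proposal is correct and follows essentially the same route as the paper: part (1) from the right almost split property, part (2a) by combining the long exact sequence of Lemma \ref{3} with the adjunction of Lemma \ref{4} and the summand bookkeeping of Lemmas \ref{04} and \ref{004}, and part (2b) by length additivity along the resulting exact sequence. The only local difference is in the ``only if'' direction of (1), where the paper lifts surjectivity of $\sHom_R(U,B)\to\sHom_R(U,C)$ to surjectivity of $\Hom_R(U,B)\to\Hom_R(U,C)$ via the comparison diagram with $\Ext^1_R(U,A)$, whereas you argue directly on representatives and apply the almost split property to the free-module component (valid, since a map from a free module onto non-free $C$ cannot split); you also correctly patch two small points the paper leaves implicit, namely that $\syz^{-1}U$ is non-free and that the non-free part of $\syz C$ is nonzero.
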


\begin{proof}
(1) Assume that $C$ is not a direct summand of $U$. Using the lifting property of an Auslander-Reiten sequence, every homomorphism from $U$ to $C$ factors through the map $B \to C$. This means that $\sHom_R(U,B) \to \sHom_R(U,C)$ is surjective. Conversely, suppose that $C$ is a direct summand of $U$. Then there is a split epimorphism $f:U \to C$. Let $g:C \to U$ be the right-inverse of $f$. If $\sHom_R(U,B) \to \sHom_R(U,C)$ is surjective, then so is the morphism $\Hom_R(U,B) \to \Hom_R(U,C)$ because of the commutative diagram
\[
\xymatrix{
\Hom_R(U,B) \ar@{->}[d] \ar@{->}[r] & \Hom_R(U,C) \ar@{->}[d] \ar@{->}[r] & \Ext_R^1(U,A) \ar@{=}[d] \\
\sHom_R(U,B) \ar@{->}[r] \ar[d] & \sHom_R(U,C) \ar[d] \ar@{->}[r] & \Ext_R^1(U,A)\\
0 & 0
}
\]
with exact rows and columns; see the proof of \cite[Proposition 2.7 (2)]{MT}. Hence there is a lift $h:U \to B$ of $f$. The morphism $hg:C \to B$ is the right-inverse of the morphism $B \to C$. This contradicts the definition of the Auslander-Reiten sequences.

(2a) Let $K$ be the kernel of the map $\sHom_R(U,A) \to \sHom_R(U,B)$. By Lemma \ref{3}, the sequence $\sHom_R(U,\syz B)\to \sHom_R(U,\syz C) \to K \to 0$ is exact. Then the equality $K=0$ is equivalent to the exactness of $\sHom_R(\syz^{-1}U, B)\to \sHom_R(\syz^{-1}U, C)\to 0$, by using Lemma \ref{4}. Lemma \ref{04} implies that $\syz^{-1} U$ is Cohen-Macaulay. By (1), $K$ is zero if and only if $C$ is not a direct summand of $\syz^{-1} U$.

Suppose that $C$ is a direct summand of $\syz^{-1} U$. By Lemma \ref{04} there is a free $R$-module $F$ such that $\syz C$ is a direct summand of $U\oplus F$. Lemma \ref{004} (3) implies that the non-free part of $\syz C$ is a direct summand of $U$, and is isomorphic to $U$ as $U$ is indecomposable.

(2b) Let $L$ be the cokernel of the map $\sHom_R(U,B) \to \sHom_R(U,C)$. Then we have an equality $[U,C]+[U,A]-[U,B]=\mathsf{length}_R(L)+\mathsf{length}_R(K)$. By (1), $\mathsf{length}_R(L)\not=0$ implies that $U$ is isomorphic to $C$. By (2a), $\mathsf{length}_R(K)\not=0$ implies that $U$ is isomorphic to the non-free part of $\syz C$.
\end{proof}

Now we can give a proof of our theorem.

\begin{proof}[Proof of Theorem \ref{1}]
Let $X$ be the module that satisfies the conditions in Lemma \ref{2}. Then there is an exact sequence with a free module $P$:
\[
0 \to \syz X \to P \to X \to 0.
\]
Since $\mathsf{Ex}(\mathsf{CM}(R))\otimes_{\mathbb{Z}} \mathbb{Q}=\mathsf{AR}(\mathsf{CM}(R))\otimes_{\mathbb{Z}} \mathbb{Q}$, there are a finite number of indecomposable Cohen-Macaulay $R$-modules $C_1,\dots,C_n$ and an equality in $\mathsf{G}(\mathsf{CM}(R))$:
\[
a([X]+[\syz X]-[P])=\sum_{i=1}^n b_i([A_i]+[C_i]-[B_i]),
\]
where $a$ is a positive integer, $b_i$ are integers and $[A_i]+[C_i]-[B_i]$ come from Auslander-Reiten sequneces $0 \to A_i \to B_i \to C_i \to 0$.
We have an equality in $\mathbb{Z}$:
\begin{equation}\label{7}
a[U,X \oplus \syz X]=\sum_{i=1}^nb_i([U,A_i]+[U,C_i]-[U,B_i])
\end{equation}
for each non-free indecomposable module $U$ in $\mathsf{\Omega CM}(R)$, since in general the equality $[U, \bigoplus_{s=1}^t L_s]=\sum_{s=1}^t [U,L_s]$ holds for any $R$-modules $L_1,\dots,L_t$. The left-hand side of \eqref{7} is nonzero by the choice of $X$ in Lemma \ref{2}, and hence so is the right-hand side. By Proposition \ref{5} (2b), this can occur only when $U$ is isomorphic to either $C_i$ or the non-free part of $\syz C_i$ for some $i$, and we conclude that the number of isomorphism classes of such modules $U$ is finite.
\end{proof}

\begin{rem}
The converse of Theorem \ref{01} has been proved by Auslander \cite{A} for artin algebras and by Auslander-Reiten \cite{AR2} for one dimensional complete local domains. We shall give examples of finite dimensional local algebras and one dimensional complete local domains which are of finite $\mathsf{\Omega CM}$ type but not of finite $\mathsf{CM}$ type. Thus finite $\mathsf{\Omega CM}$ type is not sufficient to hold the equality $\mathsf{EX}(\mathsf{CM}(R))=\mathsf{AR}(\mathsf{CM}(R))$, and the converse of Theorem \ref{1} is not true in general if we replace the condition $\mathsf{EX}(\mathsf{CM}(R))\otimes_{\mathbb{Z}}\mathbb{Q}=\mathsf{AR}(\mathsf{CM}(R))\otimes_{\mathbb{Z}}\mathbb{Q}$ with the condition $\mathsf{EX}(\mathsf{CM}(R))=\mathsf{AR}(\mathsf{CM}(R))$.
\end{rem}

\begin{ex}
Let $R=k[X,Y]/(X,Y)^2$ with $k$ a field. Then $R$ is an finite dimensional local $k$-algebra and not of finite $\mathsf{CM}$ type. Since the first syzygy $M$ of a non-free $R$-module is a submodule of a direct sum of copies of the maximal ideal $\mathfrak{m}$, the module $M$ is annihilated by $\mathrm{ann}(\mathfrak{m})=\mathfrak{m}$. So $M$ is a module over $R/\mathfrak{m}$. In particular, every non-free indecomposable module in $\mathsf{\Omega CM}(R)$ is isomorphic to $R/\mathfrak{m}$, and $R$ is of finite $\mathsf{\Omega CM}$ type.
\end{ex}

\begin{ex} \label{20}
Let $S=k[[t]]$ with $k$ a field, $n\geq 1$ be an integer and $R=k[[t^n,t^{n+1},\dots,t^{2n-1}]]$ be the subring of $S$. Then $R$ is a one dimensional complete local domain and the maximal ideal $\mathfrak{m}=t^nS$ of $R$ is isomorphic to $S$ as an $R$-module. Let $M$ be a non-free indecomposable module in $\mathsf{\Omega CM}(R)$. We  show that $M$ can be regard as an $S$-submodule of some free $S$-module. In fact, there exist a Cohen-Macaulay $R$-module $N$ and a short exact sequence $0 \to M \to R^{\oplus a} \to N \to 0$ coming from a minimal free resolution of $N$. By the minimality, we have a commutative diagram with exact rows
\[
\xymatrix{
0 \ar@{->}[r] & M \ar@{->}[r]  \ar@{=}[d] & \mathfrak{m}^{\oplus a} \ar@{->}[d] \ar@{->}[r] & L \ar@{->}[r] \ar@{->}[d] & 0\phantom{.}\\
0 \ar@{->}[r] & M \ar@{->}[r] & R^{\oplus a} \ar@{->}[r] & N \ar@{->}[r] & 0.
}
\]
By the snake lemma, $L$ is viewed as a submodule of $N$ and thus a Cohen-Macaulay $R$-module. Replacing $\mathfrak{m}$ with $S$ and multiplying $t^n$, we get a commutative diagram
\[
\xymatrix{
0 \ar@{->}[r] & M \ar@{->}[r]  \ar@{->}[d]^{t^n} & S^{\oplus a} \ar@{->}[d]^{t^n} \ar@{->}[r] & L \ar@{->}[r] \ar@{->}[d]^{t^n} & 0\\
0 \ar@{->}[r] & M \ar@{->}[r] \ar@{->}[d] & S^{\oplus a} \ar@{->}[r] \ar@{->}[d] & L \ar@{->}[r] & 0\\
& M/t^nM \ar@{->}[r] \ar@{->}[d] & S^{\oplus a}/t^nS^{\oplus a} \ar@{->}[d] & &\\
& 0 & 0 & &
}
\]
where the rows and columns are both exact. Applying the snake lemma again, we see that the morphism $M/t^nM \to S^{\oplus a}/t^nS^{\oplus a}$ in the diagram above is injective, as $L$ is a Cohen-Macaulay $R$-module. Since $t^{n+1}$ annihilates $S^{\oplus a}/t^nS^{\oplus a}$, it also annihilates $M/t^nM$. Hence $t^{n+1}M\subset t^nM$. Identifying $M$ as an $R$-submodule of $S^{\oplus a}$, we observe $tM\subset M$, which makes $M$ be an $S$-submodule of $S^{\oplus a}$. Since $S$ is a discrete valuation ring, the submodule $M$ of the free $S$-module $S^{\oplus a}$ is free. This shows that the nonisomorphic indecomposable $R$-modules in $\mathsf{\Omega CM}(R)$ are $R$ and $S(\cong \mathfrak{m})$, which especially says that $R$ is of finite $\mathsf{\Omega CM}$ type. On the other hand, $R$ is not of finite $\mathsf{CM}$ type when $n\geq 4$ (see \cite[Theorem 4.10]{LW}).
\end{ex}

\begin{ac}
The author is grateful to his supervisor Ryo Takahashi for giving him helpful advice throughout the paper. The author also thanks Osamu Iyama for coming up with the ring $R$ in Example \ref{20}.
\end{ac}

%%%%%%%%%%%%%%%%%%%%%%%%%%%%%%%%%%%%%%%%%%%%%%%%%%%%%%%%

%%%%%%%%%%%%%%%%%%%%%%%%%%%%%%%%%%%%%%%%%%%%%%%%%%%%%%%%
\end{document}